%%%%%%%%%%%%%%%%%%%%%%%%%%%%%%%%%%%%%%%%%%%%%%%%%%%%%%%%%%%%%%%%%%%
%%                                                               %%
%% This is the sample.tex file for the ejpecp document class.    %%
%% This file is for ejpecp version 1.0                           %%
%% Please be sure that you are using the lastest version:        %%
%% https://www.ctan.org/pkg/ejpecp                               %%
%%                                                               %%
%% The ejpecp class works *only* with a pdflatex engine.         %%
%% You need the ejpecp.cls in your current directory or in any   %%
%% directory scanned for cls files by your pdflatex engine.      %%
%%                                                               %%
%% Manual inclusion of page layout commands is useless.          %%
%%                                                               %%
%% Note that any complex file will produce delayed publication!  %%
%%                                                               %%
%%%%%%%%%%%%%%%%%%%%%%%%%%%%%%%%%%%%%%%%%%%%%%%%%%%%%%%%%%%%%%%%%%%

%%%%%%%%%%%%%%%%%%%%%%%%%%%%%%%%%%%%%%%%%%%%%%%%%%%%%%%%%%%%%%%%%%%
%%                                                               %%
%% Journal selection: ECP or EJP.                                %%
%%                                                               %%
%%%%%%%%%%%%%%%%%%%%%%%%%%%%%%%%%%%%%%%%%%%%%%%%%%%%%%%%%%%%%%%%%%%

\documentclass[ECP]{ejpecp} % replace ECP by EJP if needed.
%\documentclass[12pt,reqno]{amsart}

%\usepackage[notref,notcite]{showkeys}%shows labels when editing

% add preprint option to remove journal information and logos

%%%%%%%%%%%%%%%%%%%%%%%%%%%%%%%%%%%%%%%%%%%%%%%%%%%%%%%%%%%%%%%%%%%
%%                                                               %%
%% Please uncomment and adapt to your encoding if needed:        %%
%%                                                               %%
%%%%%%%%%%%%%%%%%%%%%%%%%%%%%%%%%%%%%%%%%%%%%%%%%%%%%%%%%%%%%%%%%%%

%\usepackage[T1]{fontenc}
%\usepackage[utf8]{inputenc}

%%%%%%%%%%%%%%%%%%%%%%%%%%%%%%%%%%%%%%%%%%%%%%%%%%%%%%%%%%%%%%%%%%%
%%                                                               %%
%% Please add here your own packages (be minimalistic please!):  %%
%% Please avoid using exotic packages and keep things simple.    %%
%% It is not necessary to include ams* and graphicx packages     %%
%% since they are automatically included by the ejpecp class.    %%
%%                                                               %%
%%%%%%%%%%%%%%%%%%%%%%%%%%%%%%%%%%%%%%%%%%%%%%%%%%%%%%%%%%%%%%%%%%%

%\usepackage{enumerate}  % uncomment to use this package

%%%%%%%%%%%%%%%%%%%%%%%%%%%%%%%%%%%%%%%%%%%%%%%%%%%%%%%%%%%%%%%%%%%
%%                                                               %%
%% Shorttitle (please edit and customize for running heading):   %%
%% Title (please edit and customize):                            %%
%%                                                               %%
%%%%%%%%%%%%%%%%%%%%%%%%%%%%%%%%%%%%%%%%%%%%%%%%%%%%%%%%%%%%%%%%%%%

\SHORTTITLE{Spine of Fleming-Viot process}

\TITLE{On the spine of the two-particle Fleming-Viot process driven by Brownian motion
\support{Supported in part by Simons Foundation Grant 506732.}}
%   \thanks{} % \thanks is optional. Insert line breaks with \\

%\DEDICATORY{Dedicated to the memory of ...} % Optional

%%%%%%%%%%%%%%%%%%%%%%%%%%%%%%%%%%%%%%%%%%%%%%%%%%%%%%%%%%%%%%%%%%%
%%                                                               %%
%% Authors (please edit and customize):                          %%
%%                                                               %%
%%%%%%%%%%%%%%%%%%%%%%%%%%%%%%%%%%%%%%%%%%%%%%%%%%%%%%%%%%%%%%%%%%%

\AUTHORS{%
  Krzysztof~Burdzy\footnote{University of Washington, United States of America.
    \EMAIL{burdzy@uw.edu}}
  \and %% remove this line and below if single author
  Tvrtko Tadi\'c\footnote{Microsoft Corporation,
    United States of America. \EMAIL{tvrtko@math.hr}}}%AUTHORS
%% Type \and between all consecutive authors (not only before the last author).
%% Note: you may use \BEMAIL to force a line break before e-mail display.

%% Here is a compact example with two authors with same affiliation
%% \AUTHORS{%
%%  Michael~First\footnote{Some University. \EMAIL{mf,js@uni.edu}
%%  \and
%%  John~Second\footnotemark[2]}%AUTHORS
%% Note: The \footnotemark is the footnote number that you wish to reuse. Here
%% it is [2] (we took into account the footnote generated by \thanks in title).

%%%%%%%%%%%%%%%%%%%%%%%%%%%%%%%%%%%%%%%%%%%%%%%%%%%%%%%%%%%%%%%%%%%
%%                                                               %%
%% Please edit and customize the following items:                %%
%%                                                               %%
%%%%%%%%%%%%%%%%%%%%%%%%%%%%%%%%%%%%%%%%%%%%%%%%%%%%%%%%%%%%%%%%%%%

\KEYWORDS{Fleming-Viot type process ; Brownian motion ; Bessel process ; logarithmic transformation} % Separate items with ;

\AMSSUBJ{60G17 ; 60J65 ; 60J80} % Edit. Separate items with ;
%\AMSSUBJSECONDARY{FIXME:} % Optional, separate items with ;

\SUBMITTED{November 16, 2021} % Edit.
\ACCEPTED{???} % Edit.

%%%%%%%%%%%%%%%%%%%%%%%%%%%%%%%%%%%%%%%%%%%%%%%%%%%%%%%%%%%%%%%%%%%
%%                                                               %%
%% Please uncomment and edit if you have an arXiv ID:            %%
%%                                                               %%
%%%%%%%%%%%%%%%%%%%%%%%%%%%%%%%%%%%%%%%%%%%%%%%%%%%%%%%%%%%%%%%%%%%

%\ARXIVID{NNNN.NNNNvn} % Edit.
%\HALID{hal-NNN} % Edit.

%%%%%%%%%%%%%%%%%%%%%%%%%%%%%%%%%%%%%%%%%%%%%%%%%%%%%%%%%%%%%%%%%%%
%%                                                               %%
%% The following items will be set by the Managing Editor.       %%
%%                                                               %%
%%%%%%%%%%%%%%%%%%%%%%%%%%%%%%%%%%%%%%%%%%%%%%%%%%%%%%%%%%%%%%%%%%%

\VOLUME{0}
\YEAR{2020}
\PAPERNUM{0}
\DOI{10.1214/YY-TN}

%%%%%%%%%%%%%%%%%%%%%%%%%%%%%%%%%%%%%%%%%%%%%%%%%%%%%%%%%%%%%%%%%%%
%%                                                               %%
%% Please edit and customize the abstract:                       %%
%%                                                               %%
%%%%%%%%%%%%%%%%%%%%%%%%%%%%%%%%%%%%%%%%%%%%%%%%%%%%%%%%%%%%%%%%%%%

\ABSTRACT{The spine of the two-particle  Fleming-Viot process driven by Brownian motion is not a Bessel-3 process.}

%%%%%%%%%%%%%%%%%%%%%%%%%%%%%%%%%%%%%%%%%%%%%%%%%%%%%%%%%%%%%%%%%%%
%%                                                               %%
%% Please add your own macros and environments below:            %%
%%                                                               %%
%% If possible, avoid using \def and use instead \newcommand     %%
%% If possible, avoid defining your own environments, and use    %%
%% instead the environments already defined by ejpecp:           %%
%%  assumption, assumptions, claim, condition, conjecture,       %%
%%  corollary, definition, definitions, example, exercise, fact, %%
%%  facts, heuristics, hypothesis, hypotheses, lemma, notation,  %%
%%  notations, problem, proposition, remark, theorem             %%
%%                                                               %%
%%%%%%%%%%%%%%%%%%%%%%%%%%%%%%%%%%%%%%%%%%%%%%%%%%%%%%%%%%%%%%%%%%%

%\newcommand{\ABS}[1]{\left(#1\right)} % example of author macro
%\newcommand{\veps}{\varepsilon} % another example of author macro

\newcommand{\E}{\operatorname{\mathds{E}}} % Expectation
\renewcommand{\P}{\operatorname{\mathds{P}}} % Probability

 % Expectation
\newcommand{\prt}{\partial}

\newcommand{\cV}{\mathcal{V}}
\newcommand{\C}{\mathds{C}}

%%%%%%%%%%%%%%%%%%%%%%%%%%%%%%%%%%%%%%%%%%%%%%%%%%%%%%%%%%%%%%%%%%%
%%                                                               %%
%% No macro definitions below this line please!                  %%
%%                                                               %%
%%%%%%%%%%%%%%%%%%%%%%%%%%%%%%%%%%%%%%%%%%%%%%%%%%%%%%%%%%%%%%%%%%%

\begin{document}

%%%%%%%%%%%%%%%%%%%%%%%%%%%%%%%%%%%%%%%%%%%%%%%%%%%%%%%%%%%%%%%%%%%
%%                                                               %%
%% No need for \maketitle.                                       %%
%%                                                               %%
%%%%%%%%%%%%%%%%%%%%%%%%%%%%%%%%%%%%%%%%%%%%%%%%%%%%%%%%%%%%%%%%%%%

%%%%%%%%%%%%%%%%%%%%%%%%%%%%%%%%%%%%%%%%%%%%%%%%%%%%%%%%%%%%%%%%%%%
%%                                                               %%
%% Please replace what follows by the body of your article       %%
%% (up to the bibliography):                                     %%
%%                                                               %%
%%%%%%%%%%%%%%%%%%%%%%%%%%%%%%%%%%%%%%%%%%%%%%%%%%%%%%%%%%%%%%%%%%%

\section{Introduction}\label{section:introduction}

We start with an informal outline of the main idea of this note. A more detailed review, including history and citations, will be presented later in the introduction.

A Fleming-Viot process is a process with a branching structure (but not a branching process according to terminology adopted in the literature on branching processes). Under very mild assumptions, it has a unique spine. When the number of individuals in the population is very large, the distribution of the spine is expected to be very close to the distribution of the driving process conditioned on  survival forever. There is an example showing that the distribution of the spine may be different from the distribution of the driving process conditioned on  survival forever. The published example is rather artificial so we present in this note a different example illustrating the same claim. Our new example is more natural in the sense that it is based on a model examined in a number of papers on Fleming-Viot processes.

\subsection{Literature review}
Fleming-Viot-type processes were originally defined in \cite{BHM}. 
In this model, there is a population of fixed size. Every individual moves independently from all other individuals according to the same Markovian transition mechanism, in a domain with a boundary. When an individual hits the boundary, the indvidual is killed and an individual chosen randomly (uniformly)  from the survivors  
splits into two individuals and the process continues in this manner. The question of whether the process can be continued for all times was addressed in \cite{BHM,extinctionOfFlemingViot,BBF,GK}. 
All of these papers studied, among other processes,
Fleming-Viot processes driven by Brownian motion.

A very special case of a Fleming-Viot process is when there are only two individuals driven by Brownian motion on $[0,\infty)$ and $0$ plays that role of the boundary; this model was studied in \cite{extinctionOfFlemingViot,GK}. 
In particular, it was shown in both papers that this process has  infinite lifetime.

Every Fleming-Viot process has a unique spine, i.e., a trajectory inside the branching tree that never hits the boundary of the domain where the process is confined; this was proved under strong assumptions in \cite[Thm. 4]{GK} and later in the full generality in \cite{BB18}. 

It was proved in \cite{BB18} that if the state space is finite and the number of individuals in the population goes to infinity then the distributions of spine processes converge to the distribution of the driving  Markov process conditioned on  survival forever. The same result has also been proven for diffusions reflected normally off the boundary
of a compact domain with soft killing (see \cite{ToughArxiv}), and 
for Brownian motions on a Lipshitz domain with hard killing (see \cite{BurEng}). The proof given in  \cite{ToughArxiv} relies only on the driving process being a
Feller process on a compact domain converging exponentially quickly to its quasi-stationary distribution. Therefore convergence of the spine is better understood when the domain is compact, or
at least bounded, than when the domain is unbounded (as with the Fleming-Viot process
studied in this paper).

The rate of convergence of the distribution of the Fleming-Viot process driven by a general Markov process to the quasi-stationary distribution was investigated in \cite{Vill}.

In \cite{BB18},
an example was given of a Fleming-Viot process driven by a Markov process on a three-element state space such that one of the elements plays the role of the boundary, the population consists of two individuals and the distribution of the spine is not equal 
to the distribution of the driving  Markov process conditioned on  survival forever. A Markov process with a three-element state space seems to be a rather artificial example in the context of Fleming-Viot models. We will show that the spine of the Fleming-Viot process with two individuals driven by Brownian motions on $[0,\infty)$ has a spine with a distribution different from the distribution of Brownian motion conditioned to stay positive, i.e., the distribution of the 3-dimensional Bessel process.
The point of this note is to show that proving that the spine does not have the distribution of the 3-dimensional Bessel process is somewhat tricky. In hindsight, this does not seems to be difficult because our proof is quite elementary. Nevertheless, our previous attempts in \cite{IEDPaper,ldm2020} generated some new results but failed to show the difference.
In a sense that will be made more
precise later on in the paper, the spine is quite ``close'' to a
3-dimensional Bessel process and therefore it is quite hard to
distinguish the two. The problem has been open for some time and while the solution is not
really difficult we now have a  better understanding as
to why the spine is nevertheless similar to a 3-dimensional Bessel process
in certain respects.

\section{Model and main result}

We will now define a Fleming-Viot process and  other elements of the model.
Informally, the process consists of two independent Brownian particles starting at the same point in $(0,\infty)$. At the time when one of them hits 0, it is killed and the other one branches into two particles. The new particles start moving as independent Brownian motions and the scheme is repeated.

\subsection{Notation and definitions}
On the formal side, let
$(W_1(t):t\geq 0)$ and $(W_2(t):t\geq 0)$ be two independent Brownian motions starting from $W_1(0) =W_2(0)=1$. Let
\begin{align*}
T_0&=0,\\
Y_0&=1,\\
\tau_j &= \inf\{t\geq 0: W_j(t) =0\}, \qquad j=1,2,\\
T_1&=\min(\tau_1,\tau_2),\\
Y_1&=\max(W_1(T_1),W_2(T_1)),
\end{align*}
and for $k\geq 2$,
\begin{align*}
T_{k}&=\inf\{t>T_{k-1} : \min(W_1(t)-W_1(T_{k-1})+Y_{k-1},W_2(t)-W_2(T_{k-1})+Y_{k-1}) =0 \},\\
Y_k&=\max(W_1(T_k)-W_1(T_{k-1})+Y_{k-1},W_2(T_k)-W_2(T_{k-1})+Y_{k-1}).
\end{align*}
It follows from \cite[Thm.~5.4]{BBF} or \cite[Thm.~1]{GK} that, a.s.,
\begin{equation}
 T_k\to \infty. \label{eq:TInf}
\end{equation}
Hence, for any $t\geq 0$ we can find $j$ such that $t\in [T_{j-1},T_{j})$. Then we set
\begin{align}\label{j18.1}
\cV(t)&=(V_1(t),V_2(t))
      =(W_1(t)-W_1(T_{j-1})+Y_{j-1},W_2(t)-W_2(T_{j-1})+Y_{j-1}).
\end{align}
This completes the definition of $\{\cV(t), t\geq 0\}$, an example of a Fleming-Viot process. 

Let $J_t = J(t)$ denote the spine, i.e., $J_t = V_1(t)$ for
$t\in [T_{k-1},T_{k})$ 
if $V_1(T_k-) > V_2(T_k-)=0$.
If the last condition fails, we let  $J_t = V_2(t)$ for
$t\in [T_{k-1},T_{k})$.

Note that $J(T_k) = Y_k$ 
for all $k\geq 1$. 

Recall that $d$-dimensional Bessel process $X_t$ is defined by
\begin{align}\label{s26.3}
dX_t = dB_t + \frac {d-1}{2 X_t} dt,
\end{align}
where $B$ is Brownian motion; see \cite[Sect. 3.3 C]{KS}. It is well known that Brownian motion on $[0,\infty)$ conditioned to never hit 0 has the transition probabilities of 3-dimensional Bessel process; this theorem was first proved in \cite{Doob57}.

\subsection{Main result}

\begin{theorem}\label{s26.1}
The distributions of $\{J_t, 0\leq t <\infty\}$ and 3-dimensional Bessel process $\{X_t, 0\leq t <\infty\}$ starting from 1 are singular with respect to each other.
\end{theorem}

We will  explicitly define an event that has a strictly positive probability according to  the first distribution but not according to the second one, and vice versa.

\medskip
We will now review two attempts to prove Theorem \ref{s26.1} that failed.

The following version of the Law of the Iterated Logarithm was proved in \cite{ldm2020}.
\begin{theorem}\label{thm:LawOfTheIteratedLogartihm}
Almost surely, 
\begin{align}\label{eq:lil}
\limsup_{n\to\infty}\frac{Y_n}{\sqrt{2 T_n\log \log T_n}}=1.
\end{align}
\end{theorem}

The Law of Iterated Logarithm stated in \eqref{eq:lil} is the same
 as that for the 3-dimensional Bessel process (see \cite{SW}), which has the same distribution as the  one-dimensional Brownian motion conditioned not to hit 0. Hence, 
Theorem \ref{thm:LawOfTheIteratedLogartihm} does not eliminate the possibility that the spine $J_t$ has the distribution of Brownian motion conditioned not to hit 0.

In this note, we will prove the following result.
\begin{theorem}\label{s26.2}
For every $u>0$,
\begin{align*}
\frac{1}{u}\log\P\left(\inf_{s\geq 0} \log X(s)<-u \right)=-1=\lim_{t\to \infty}\frac{1}{t}\log\P\left(\inf_{n\geq 0} \log J(T_n)<-t \right).
\end{align*}
\end{theorem}

The general message from Theorems \ref{thm:LawOfTheIteratedLogartihm} and \ref{s26.2} is that it is hard to distinguish between the spine and 3-dimensional Bessel process by studying ``extreme'' behavior of the two processes. 
The proof of Theorem \ref{s26.1} will be based on the analysis of the processes on the ``logarithmic scale.''

\section{Bessel processes}\label{sec:bes}

Let 
\begin{align}\label{m29.13}
\rho(t)= \int_0^t \frac 1 {X_s^{2}}ds.
\end{align}

\begin{lemma}\label{m29.9}
If $X$ is three-dimensional Bessel process with $X_0=1$ and $B$ is Brownian motion with $B_0=0$ then $\{ \log X_{\rho^{-1}(t)}, t\geq 0\}$ has the same distribution as the process $\left\{B_t + \frac 1 2 t, t\geq 0\right\}$.
\end{lemma}

\begin{proof}

Recall the stochastic differential equation \eqref{s26.3} defining Bessel processes.
Let $f(x) = \log x$. Then $f'(x) = 1/x$ and $f''(x) = - 1/x^2$. Let $A_t = f(X_t)$.
Then by the Ito formula
\begin{align*}
dA_t &= df(X_t)= \frac1 X_t dB_t + \left( \frac {d-1} {2X_t} \cdot \frac{1}{X_t} - \frac {1}{ 2} \cdot \frac 1 {X_t^2}\right) dt
=  \frac{1}{X_t} dB_t +  \frac {d-2} {2X_t^2}  dt\\
&= e^{-A_t} dB_t + \frac {d-2} {2} e^{-2A_t}  dt.
\end{align*}
For 3-dimensional Bessel process, i.e., when $d=3$, the formula is
$$
dA_t =  e^{-A_t} dB_t + \frac{1}{2} e^{-2A_t}  dt.
$$
We see that
the process $A_t$ is a time change of 
the process $ B_t + \frac 1 2 t$, if we use the clock 
\begin{align*}
\rho(t)=\int_0^t e^{-2A_s}ds= \int_0^t \frac 1 {X_s^{2}}ds.
\end{align*}
In other words, $\{A_{\rho^{-1}(t)}, t\geq 0\} =\{ \log X_{\rho^{-1}(t)}, t\geq 0\}$ has the same distribution as the process $\left\{B_t + \frac 1 2 t, t\geq 0\right\}$.
\end{proof}

\begin{lemma}\label{cor:logBessMin}
Suppose that $X= \{X(t)\ :\ t\geq 0\}$ is the 3-dimensional Bessel process with $X(0)=1$.
Let $M=\inf_{t\geq 0} \log X(t)$.
Then $-M$ has the exponential distribution with mean 1.
\end{lemma}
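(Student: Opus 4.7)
The plan is to reduce the lemma to computing the distribution of $I := \inf_{t \geq 0} X(t)$ and then pass to the logarithm. By Lemma~\ref{lemma:logBesselDrift} the process $X$ is transient, so $I \in (0,1]$ almost surely; once we know $\P(I \leq b) = b$ for every $b \in (0,1)$ (i.e.\ $I$ is uniform on $(0,1)$), the conclusion of the lemma is immediate, since $\P(-M \geq u) = \P(I \leq e^{-u}) = e^{-u}$ for all $u > 0$.

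To obtain the uniform distribution of $I$, I would use the scale function of the 3-dimensional Bessel process. Either by applying Ito's formula directly to \eqref{s26.3} with $d=3$, or by noting that $x \mapsto 1/x$ is harmonic for the generator $\tfrac{1}{2}\tfrac{d^2}{dx^2} + \tfrac{1}{x}\tfrac{d}{dx}$, one sees that $1/X_t$ is a local martingale on $(0,\infty)$. Fix $b \in (0,1)$ and set $\tau_b = \inf\{t \geq 0 : X_t = b\}$. The stopped process $1/X_{t \wedge \tau_b}$ is bounded by $1/b$ and hence a bounded martingale, and optional stopping yields $\E[1/X_{t\wedge \tau_b}] = 1$ for every $t$. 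Letting $t \to \infty$, transience of $X$ forces $1/X_t \to 0$ on $\{\tau_b = \infty\}$, and bounded convergence then gives
\begin{align*}
1 \;=\; \P(\tau_b < \infty) \cdot \frac{1}{b},
\end{align*}
i.e.\ $\P(\tau_b < \infty) = b$. Continuity of $X$ identifies $\{\tau_b < \infty\}$ with $\{I \leq b\}$, completing the computation.

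I do not anticipate any real obstacle here; the only delicate point is the passage to the limit in optional stopping, which is handled cleanly by the boundedness of $1/X_{t\wedge \tau_b}$ together with the transience supplied by Lemma~\ref{lemma:logBesselDrift}. An alternative route would be to invoke Doob's $h$-transform representation of the 3-dimensional Bessel process as Brownian motion conditioned to remain positive and then apply the well-known distribution of the running minimum of conditioned Brownian motion, but the direct martingale argument above is more self-contained and matches the style used elsewhere in the note.
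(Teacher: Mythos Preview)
Your argument is correct, but it follows a different route from the paper's. The paper uses the time change already set up before Lemma~\ref{lemma:logBesselDrift}: since $\{\log X_{\rho(t)}\}$ has the law of $\{B_t+\tfrac12 t\}$ and a time change does not alter the infimum, $M$ has the law of $\min_{t\ge 0}\bigl(B_t+\tfrac12 t\bigr)$, whose negative is exponential with mean $1$ by a cited textbook exercise. Your approach instead works directly on $X$ via its scale function $x\mapsto 1/x$: optional stopping for the bounded martingale $1/X_{t\wedge\tau_b}$ yields $\P(\inf_t X_t\le b)=b$, i.e.\ $\inf_t X_t$ is uniform on $(0,1)$, and taking logarithms finishes. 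The paper's route dovetails with the logarithmic-transformation theme of the note and reuses machinery already in place, but outsources the final step to a reference; your route is more self-contained and recovers the classical uniform law for the global minimum of the $3$-dimensional Bessel process. One small remark: invoking Lemma~\ref{lemma:logBesselDrift} for transience is slightly indirect (you need that the time change exhausts $[0,\infty)$); a cleaner variant is the two-barrier optional stopping $\P(\tau_b<\tau_c)=b(c-1)/(c-b)\to b$ as $c\to\infty$, or simply the observation that $X$ is the modulus of three-dimensional Brownian motion.
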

\begin{proof}
Time change does not  affect the distribution of the infimum of a process, hence, by Lemma \ref{m29.9}, $M$ has the same distribution as 
$\min_{t\geq 0}\left( B_t +\frac{1}{2}t\right)$.
According to \cite[Sect. 3.3, Exercise 5.9]{KS}, $-M$ is exponential  with mean 1.
\end{proof}

\section{Logarithmic transformation of Fleming-Viot process}\label{section:logTransform}

We will use complex representation $V_1(t)+iV_2(t)$ of the process $\cV(t) = (V_1(t), V_2(t)) $ defined in \eqref{j18.1}. 
We apply the complex mapping $z\mapsto \log z$ to this process so that it is transformed into a process in the strip $D:=\{(x,y): 0 < y < \pi/2\}$ (see  Fig. \ref{fig}). Consider the following ``clocks,''
\begin{align}\notag
\phi(t) &= \int_0^t\frac{1}{|\cV(s)|^2}ds,\\
\label{m29.14}
\sigma(t)&=\int_0^{t}\frac{1}{J(s)^2}ds.
\end{align}
It follows from conformal invariance of  two-dimensional Brownian motion (see  \cite[Thm. V (2.5)]{REYO}) that
the process $Z(t)=(Z_1(t),Z_2(t)):=\log \cV({\phi(t)})$ is two-dimensional  Brownian motion jumping from the boundary of $D$ to an appropriate point in $D$ every time it exits $D$.  
Let $R_1, R_2, \dots$ be the times of jumps of $Z$, and let $R_0=0$.

\begin{center}
\begin{figure}[h]
\includegraphics[width=1\linewidth]{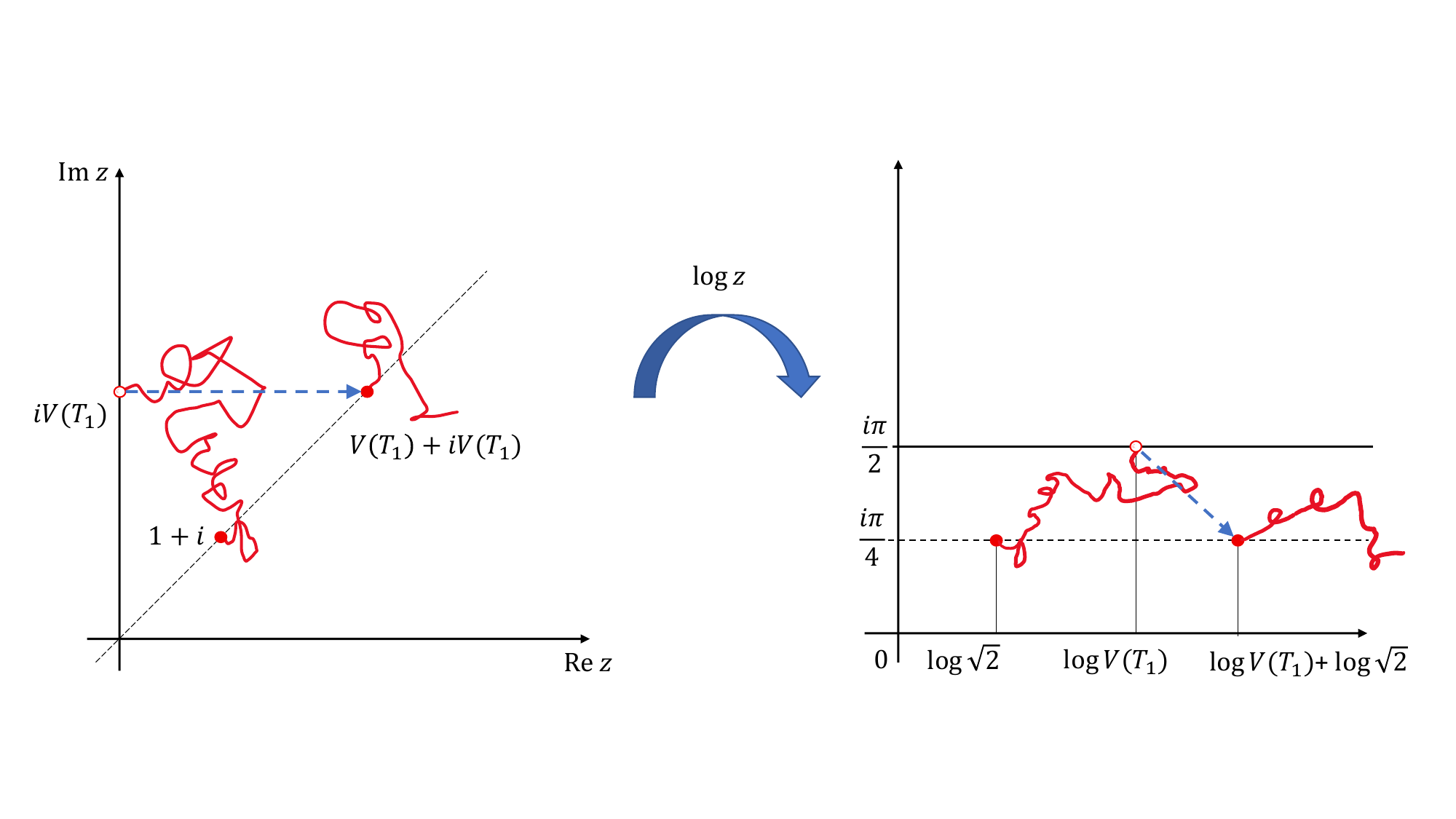}
\caption{The logarithmic transformation of the Fleming-Viot process.}
\label{fig}
\end{figure}
\end{center}

\begin{lemma}
The process
 $\{\log J(T_n), n\geq 0\}$ is a random walk, such that $\log J(T_0) =\log J_0 =0$, and satisfying 
\begin{align}\label{m29.16}
\log J(T_n)=\log J(T_{n-1}) + \log \sqrt{2} + K_n, \qquad n\geq 1,
\end{align}
where $\{K_n,n\geq 1\}$ is an i.i.d. sequence. The distribution of $K_n$ is that of $Z_1(R_1-)$. We also have
\begin{align}\label{m29.1}
 \lim_{t\to \infty} \frac{ \log J(T_n)} n 
= \log\sqrt{2}.
\end{align}
\end{lemma}
\begin{proof}

A jump takes $Z$ from  $(Z_1(R_k-),Z_2(R_k-)) \in \prt D $, i.e., the point at which $Z$ exits $D$, to $\left(\log \sqrt{2} + Z_1(R_k-), \pi/4\right)$. 

Brownian motions driving $Z_1$ and $Z_2$ between jumps are independent of each other. The times $R_k$ are the times when $Z_2$ exits $[0,\pi/2]$.
 Hence, random variables $\{R_k, k\geq 0\}$ are independent of the Brownian motion $B_t$ driving $Z_1(t)$.  The time $R_1$ is the exit time from the interval $[0, \pi/2]$ for a Brownian motion  starting at $\pi/4$ and independent of $B$.
Note that  
$$\log J(T_n) - \log J(T_{n-1})= \log \sqrt{2} + Z_1(R_n-)-Z_1(R_{n-1}).$$
The first two claims of the lemma follow from  independence of $\{R_n,n\geq 1\}$ and $B$, and the fact that $R_n-R_{n-1} \stackrel{d}{=}R_1$.

It is easy to see that $\E|Z_1(R_n-)-Z_1(R_{n-1})| < \infty$. Hence, by symmetry, $\E(Z_1(R_n-)-Z_1(R_{n-1}))=0$. Thus we can use the law of large numbers and \eqref{m29.16} to obtain \eqref{m29.1}.
\end{proof}

\begin{theorem}\label{thm:empLogMinimum}
We have 
$$\lim_{t\to\infty}e^t\P\left(\inf_{n\geq 0} \log J(T_n)<-t\right)=c\in (0,\infty).$$ 
\end{theorem}
\begin{proof}
Since $\{\log J(T_n),n\geq 0\}$ is a random walk, the function
\begin{align*}
t\to \P\left(\inf_{n\geq 0} \log J(T_n)<-t\right)
=\P\left(\sup_{n\geq 0}(- \log J(T_n))>t\right)
\end{align*}
satisfies the Wiener-Hopf equation; see  \cite[point 2, bottom of page 191]{asmussen} for general overview, and \cite[Theorem 3.1]{MR652832}. It follows from these references  
that 
\begin{align}\label{s30.1}
\lim_{t\to\infty}e^{\gamma t}\P\left(\inf_{n\geq 0} \log J(T_n)<-t\right) = \lim_{t\to\infty}e^{\gamma t}\P\left(\sup_{n\geq 0}(- \log J(T_n))>t\right)=c,
\end{align}
where $\gamma$
is the positive solution to the equation 
$$\E\left[e^{\gamma(- \log J(T_1))}\right]=\E[J(T_1)^{-\gamma}]=1.$$
It follows from  \cite[(6.21)]{GK} that
$$\P(J(T_1)\in dy)=\frac{2}{\pi}\left[\frac{1}{(1-y)^2+1}-\frac{1}{(1+y)^2+1}\right].$$
It is not difficult to check that $\E[J(T_1)^{-1}]=1$.
Hence $\gamma = 1$ and, therefore, the theorem follows from \eqref{s30.1}. 
\end{proof}

\begin{proof}[Proof of Theorem \ref{s26.2}]
The theorem follows from Lemma \ref{cor:logBessMin} and Theorem \ref{thm:empLogMinimum}.
\end{proof}

\section{Comparing the spine and 3-dimensional Bessel process}

\begin{lemma}\label{m29.30}
(i) We have
\begin{align}\label{m29.10}
\alpha&:=\E \int_0^{T_1} \frac 1 {J_t^2} dt ==\frac{8 C}{\pi }-\log (2)
\approx 1.63934,
\end{align}
where $C\approx 0.915966$ is the Catalan's constant.

(ii) Random variables
\begin{align}\label{m29.20}
 \int_{T_n}^{T_{n+1}} \frac 1 {J_t^2} dt, \qquad n\geq 0,
\end{align}
are i.i.d.
\end{lemma}

\begin{proof} (i)
We will switch between complex and real notation
and write $z=x+i y$, $|z| = \sqrt{x^2+y^2}$.
Let $A\subset \C$ denote the first quadrant.
The Green function in $A$ with the pole at $1+i $ is given by
\begin{align*}%\label{m29.4}
G(z) &= \frac 1 \pi \log \left|
\frac{z^2 + 2 i}{z^2-2i}
\right| .
\end{align*}
The normalization $1/\pi$ is probabilistic, i.e., the integral of thus normalized Green function is equal to the expected lifetime of Brownian motion starting from $1+i$ and killed upon exiting $A$.

The process $\{J_t, 0\leq t <T_1\}$ has the same distribution as $\{W_2(t), 0 \leq t \leq T_1\}$ conditioned by $\{\tau_1 < \tau_2\}$. Hence we will estimate the expectation in \eqref{m29.10} assuming that $(W_1, W_2)$ is conditioned to exit $A$ through the vertical axis. This process is a Doob's transform, or an $h$-process,
where $h$ is harmonic in $A$ with boundary values 1 on the vertical axis and 0 on the horizontal axis (see \cite[Part 2, Chap. X]{Doob84} or \cite[Ch. 11]{ChungWalsh} for the theory of $h$-processes). The only harmonic function with these boundary values is $h(z) = (2/\pi)\arg(z)=(2/\pi) \arctan(y/x)$. 

The Green function for $(W_1,W_2)$ conditioned by $h$ is
$ G( z)h(z)/h(1+i)= 2 G(z)h(z)$ so
\begin{align}\notag
\E \int_0^{T_1} \frac 1 {J_t^2} dt 
&= \int_A 2 G( z)h(z) \frac 1 {y^2} dz
= \int_A 2 G( x+iy)h(x+iy) \frac 1 {y^2} dx dy\\
&= \int_A 2 G( x+iy)(2/\pi) \arctan(y/x) \frac 1 {y^2} dx dy \label{m29.5}\\
&= \int_A \frac 4 \pi G( x+iy) \arctan(y/x) \frac {x^2+y^2} {y^2}
\frac 1 {x^2+y^2} dx dy \notag \\
&= \int_A \frac 4 \pi G( x+iy) \arctan(y/x) \frac {1+(y/x)^2} {(y/x)^2}
\frac 1 {x^2+y^2} dx dy.\notag
\end{align}
Next we will change the variables. Informally speaking, we will apply the complex function $z \to \log z$. In terms of real coordinates, we take
\begin{align*}
(x, y) &= \left(e^{r}\cos\theta , e^{r}\sin\theta\right),\\
dr d\theta &= \frac 1 {x^2+y^2} dx dy .
\end{align*}
Note that $y/x = \tan \theta$ and $\arctan(y/x) = \theta$.
Let $G_*(r,\theta) = G(x+iy)$ and note that $G_*$ is the Green function in the strip $A_*:=\{(r,\theta): 0< r < \pi/2\}$ with the pole at $(\sqrt{2},\pi/4)$.
We obtain
\begin{align*}
&\int_A \frac 4 \pi G( x+iy) \arctan(y/x) \frac {1+(y/x)^2} {(y/x)^2}
\frac 1 {x^2+y^2} dx dy\\
&=\int_{A_*} \frac 4 \pi G_*( r,\theta) \theta \frac {1+\tan^2\theta} {\tan^2\theta} dr d\theta
=\int_0^{\pi/2} \frac 4 \pi  \theta \frac {1+\tan^2\theta} {\tan^2\theta}\int_{-\infty}^\infty G_*( r,\theta) dr d\theta.
\end{align*}
The function $G_1(\theta):= \int_{-\infty}^\infty G_*( r,\theta) dr$ is the Green function for the one-dimensional Brownian motion starting from $\pi/4$ and killed upon exiting $(0, \pi/2)$. Hence, 
\begin{align*}
G_1(\theta) = 
\begin{cases}
\theta & \text{  for  } 0 < \theta < \pi/4,\\
\pi/2-\theta & \text{  for  } \pi/4< \theta < \pi/2.
\end{cases}
\end{align*}
Note that $G_1(\theta)$ is properly normalized, i.e., $\int_0^{\pi/2}G_1(\theta)d\theta = \pi^2/16$. In other words, the integral is equal to the expected exit time, known to be $\pi^2/16$, from $(0,\pi/2)$ for one-dimensional Brownian motion starting from $\pi/4$.

We obtain
\begin{align*}
&\int_0^{\pi/2} \frac 4 \pi  \theta \frac {1+\tan^2\theta} {\tan^2\theta}\int_{-\infty}^\infty G_*( r,\theta) dr d\theta\\
&=
\int_0^{\pi/4} \frac 4 \pi  \theta \frac {1+\tan^2\theta} {\tan^2\theta}\theta d\theta
+
\int_{\pi/4}^{\pi/2} \frac 4 \pi  \theta \frac {1+\tan^2\theta} {\tan^2\theta}(\pi/2-\theta) d\theta\\
&=\left[\frac{4 C}{\pi }-\frac{\pi }{4}+\log (2)\right]
+ \left[\frac{1}{4} \left(\frac{16 C}{\pi }+\pi -\log (256)\right)\right]
=\frac{8 C}{\pi }-\log (2)
\approx 1.63934,
\end{align*}
where $C\approx 0.915966$ is the Catalan's constant.
The exact values of the integrals were computed using Mathematica. The numerical value was confirmed by numerical calculations (Riemann sum approximation). 

(ii) By Brownian scaling and the strong Markov property applied at $T_n$,
\begin{align*}
\left\{\frac{\cV(T_n +t V^2_1(T_n))}{V_1(T_n)}, t\in[0, (T_{n+1}-T_n)/V_1^2(T_n))\right\}, \quad n\geq 1,
\end{align*}
are i.i.d. For more details see \cite[Lemma 7.10]{ldm2020}. This implies that
\begin{align*}
\left\{\frac{J(T_n +t V^2_1(T_n))}{V_1(T_n)}, t\in[0, (T_{n+1}-T_n)/V_1^2(T_n))\right\}, \quad n\geq 1,
\end{align*}
are i.i.d., and so are
\begin{align*}%\label{m29.20}
 \int_{T_n}^{T_{n+1}} \frac 1 {J_t^2} dt
=  \int_{0}^{T_{n+1}-T_n} \frac 1 {J^2(T_n+t)} dt
=  \int_{0}^{(T_{n+1}-T_n)/V_1^2(T_n)} 
\frac {V_1^2(T_n)} {J^2(T_n+sV_1^2(T_n))} ds.
\end{align*}
\end{proof}

\begin{remark}\label{m29.8}
We have $\log\sqrt{2}\approx 0.346574 <0.81967 \approx \alpha/2$.
\end{remark}

\begin{proof}[Proof of Theorem \ref{s26.1}]

By Lemma \ref{m29.9},  $\{ \log X_{\rho^{-1}(t)}, t\geq 0\}$ has the same distribution as the process $\left\{B_t + \frac 1 2 t, t\geq 0\right\}$. Hence, a.s., 
\begin{align*}
\lim_{t\to \infty} \frac{ \log X_{\rho^{-1}(t)}} t 
= \lim_{t\to \infty} \frac{ B_t + \frac 1 2 t} t = 1/2.
\end{align*}
If $L_n$ is any sequence of positive random variables such that, a.s.,
\begin{align*}
\lim_{n\to\infty} L_n/n = \alpha
\end{align*}
then, a.s.,
\begin{align}\label{m29.11}
\lim_{t\to \infty}  \frac{ \log X_{\rho^{-1}(L_n)}} n 
 = \lim_{t\to \infty} \frac{ \log X_{\rho^{-1}(L_n)}} {L_n/\alpha} 
 = \frac  \alpha 2.
\end{align}

Let $U_n = \sigma(T_n)$, recall definition \eqref{m29.10}, and use Lemma \ref{m29.30} (ii) and the law of large numbers to see that, a.s.,
\begin{align*}
\lim_{n\to\infty} U_n/n = \alpha.
\end{align*}
By \eqref{m29.1}, a.s.,
\begin{align}\label{m29.12}
\lim_{t\to \infty} \frac{ \log J(\sigma^{-1}(U_n))} n 
= \lim_{t\to \infty} \frac{ \log J(T_n)} n 
= \log\sqrt{2}.
\end{align}

Note that the time transformations of processes $\log X$ and $\log J$ are based on ``clocks'' $\rho$ and $\sigma$ defined in an analogous way (see \eqref{m29.13} and \eqref{m29.14}). By Remark \ref{m29.8}, the limits in \eqref{m29.11} and \eqref{m29.12} are a.s. and different so the distributions of $J$ and $X$ are mutually singular.
\end{proof}

\begin{remark}
Theorem \ref{s26.1} compares distributions of processes $J$ and $X$ starting from 1 but it is clear that the limits in \eqref{m29.11} and \eqref{m29.12} do not depend on the initial distributions of the two processes. Hence, the claim in Theorem \ref{s26.1} holds for any initial distributions of $J$ and $X$.
\end{remark}

%%%%%%%%%%%%%%%%%%%%%%%%%%%%%%%%%%%%%%%%%%%%%%%%%%%%%%%%%%%%%%%%%%%
%%                                                               %%
%% Use the two commands below for producing your bibliography    %%
%% with bibtex, then comment again the commands and include the  %%
%% content of the .bbl file in this file below the commands.     %%
%%                                                               %%
%%%%%%%%%%%%%%%%%%%%%%%%%%%%%%%%%%%%%%%%%%%%%%%%%%%%%%%%%%%%%%%%%%%

\bibliographystyle{amsplain}
\bibliography{SpineECP7}

% add below the content of your .bbl file produced by bibtex.

%%%%%%%%%%%%%%%%%%%%%%%%%%%%%%%%%%%%%%%%%%%%%%%%%%%%%%%%%%%%%%%%%%%
%%                                                               %%
%% You may add acknowledgments (optional).                       %%
%%                                                               %%
%%%%%%%%%%%%%%%%%%%%%%%%%%%%%%%%%%%%%%%%%%%%%%%%%%%%%%%%%%%%%%%%%%%
\begin{acks}
We are grateful to Don Marshall and Jan Swart for the most useful advice. 
We thank the referees for finding and correcting a significant error in the first version of this article and for many suggestions for improvement.
\end{acks}

%%%%%%%%%%%%%%%%%%%%%%%%%%%%%%%%%%%%%%%%%%%%%%%%%%%%%%%%%%%%%%%%%%%
%%                                                               %%
%% You have reached the end of your document.                    %%
%%                                                               %%
%%%%%%%%%%%%%%%%%%%%%%%%%%%%%%%%%%%%%%%%%%%%%%%%%%%%%%%%%%%%%%%%%%%

\end{document}